\documentclass{article}
\usepackage{graphicx} 
\usepackage{amsmath, amssymb, amsthm}
\usepackage{enumerate}
\newtheorem{thm}{Theorem}[section]
\newtheorem{cor}[thm]{Corollary}
\newtheorem{lem}[thm]{Lemma}

\theoremstyle{definition}

\title{Cardinalities of Ultraproducts of Finite Sets in ZF + DC}
\author{Jacob Kowalczyk}
\date{May 15, 2025}

\begin{document}

\maketitle

\begin{abstract}
    It is consistent with ZF + DC that for some ultrafilter $U$ on $\omega$, two infinite ultraproducts of finite sets $\prod A_n / U$ and $\prod B_n / U$ have the same cardinality if and only if $0 < \lim_U |A_n|/|B_n| < \infty$. In particular, this holds in $W[U]$, where $W$ is the Solovay Model and  $U$ is $[\omega]^\omega$-generic.
\end{abstract}

\section{Introduction}

It is not known if it is consistent with ZFC to have an ultrafilter $U$ on $\omega$ with a pair of non-isomorphic infinite ultraproducts of finite linear orders, equivalently, whether there is a pair of non-isomorphic infinite initial segments of $\omega^\omega /U$ with maximal elements. This paper resolves the question in the ZF + DC case and determines which pairs of these ultraproducts are provably isomorphic in ZF + DC. Furthermore, we show that a pair of these ultraproducts is provably isomorphic in ZF + DC if and only if they are provably of the same cardinality.

The model of ZF + DC we are considering in this paper is $W[U]$, where $W$ is the Solovay Model and $U$ is generic over $[\omega]^\omega$. It is fairly straightforward to prove in ZF + DC that two ultraproducts of finite linear orders are isomorphic if the sizes of the linear orders grow at a similar rate. The core of the paper is proving that in $W[U]$ two ultraproducts of finite sets have different cardinalities when the sizes of the sets do not grow at similar rates. The results of the paper are summarized in the following theorem.
%
%

\begin{thm}
    It is consistent with ZF + DC that there is an ultrafilter $U$ on $\omega$ such that for all $f,g: \omega \rightarrow \omega$ with $f(n),g(n) \rightarrow_U \infty$, the following are equivalent:
    \begin{enumerate}[{(1)}]
        \item $0 < \lim_U f(n)/g(n) < \infty$
        \item $\prod_n (f(n),<)/U \cong \prod_n (g(n),<)/U$
        \item $\prod_n f(n)/U$ and $\prod_n g(n)/U$ have the same cardinality.
    \end{enumerate}
    The implications $(1) \implies (2) \implies (3)$ are provable in ZF + DC.
\end{thm}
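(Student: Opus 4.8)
The plan is to prove the implication cycle $(1)\Rightarrow(2)\Rightarrow(3)\Rightarrow(1)$, carrying out the first two steps in plain ZF + DC (so that they hold in $W[U]$) and the last step in $W[U]$ itself; since $W[U]$ satisfies ZF + DC, this delivers both the displayed equivalence and the stated provability claim. The implication $(2)\Rightarrow(3)$ is immediate: the universe of the linear order $\prod_n(f(n),<)/U$ is by definition exactly the set $\prod_n f(n)/U$ (and likewise for $g$), so an isomorphism of the two linear orders is in particular a bijection of the two ultraproducts of sets. For $(1)\Rightarrow(2)$, write $c=\lim_U f(n)/g(n)\in(0,\infty)$ and fix an integer $M$ with $1/M<c<M$, so that $1/M<f(n)/g(n)<M$ for $U$-almost all $n$; using transitivity of $\cong$ and letting the ultrafilter decide between $f\le g$ and $g\le f$, one reduces to the case $f(n)\le g(n)\le Mf(n)$ for $U$-a.a.\ $n$. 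There I would build the isomorphism by a back-and-forth: since all factors are finite, the {\L}o{\'s} theorem and $\aleph_1$-saturation of these ultraproducts need only countable choice and so are available in ZF + DC; each of these structures is, as a linear order, of the form $\mathbb{N}+\mathbb{Z}\cdot D+\mathbb{N}^{*}$ with $D$ an $\aleph_1$-saturated dense linear order without endpoints; and the explicit rescaling map $[a]\mapsto[\,n\mapsto\lfloor a(n)f(n)/g(n)\rfloor\,]$ induces a choice-free isomorphism of the galaxy orders $D_g\cong D_f$, after which the $\mathbb{Z}$-chains and the two ends match up routinely. (The introduction flags this direction as straightforward, so I would keep it short.)

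The substance is $(3)\Rightarrow(1)$ in $W[U]$, which I would prove by contraposition. Assume $\lim_U f(n)/g(n)\notin(0,\infty)$; by the symmetry $f\leftrightarrow g$ assume $\lim_U f(n)/g(n)=0$, so that $\{n:f(n)<g(n)\}\in U$, and suppose toward a contradiction that $h\in W[U]$ is a surjection of $\prod_n f(n)/U$ onto $\prod_n g(n)/U$. Since $W$ is the Solovay model, every element of $W$---in particular every $[\omega]^\omega$-name---is ordinal-definable from a real, so there are a real $r\in W$, an $[\omega]^\omega$-name $\dot h\in W$ definable from $r$ and ordinals, and a condition $p$ in the generic filter with $p\Vdash$ ``$\dot h$ is a surjection of $\prod_n f(n)/\dot U$ onto $\prod_n g(n)/\dot U$''. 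As $[\omega]^\omega$ (i.e.\ $\mathcal{P}(\omega)/\mathrm{fin}$) is $\sigma$-closed, it adds no reals and preserves DC; hence $W[U]$ satisfies ZF + DC and every element of $\prod_n f(n)/U$ and of $\prod_n g(n)/U$ has a representative in $\omega^\omega\cap W$.

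The crux is a \emph{local determination lemma}, to be established in $W$: there are a condition $q\le p$ and functions $h_n:f(n)\to g(n)$ for $n\in q$, all in $W$, such that $q\Vdash$ ``for every $a\in W$ with $a(n)<f(n)$ for all $n$, one has $\dot h([\check a]_{\dot U})=[\check b]_{\dot U}$, where $b(n)=h_n(a(n))$''---that is, below $q$ the name $\dot h$ is simply the ultraproduct of the coordinatewise maps $(h_n)_{n\in q}$. Granting the lemma, the argument closes quickly. If for some $q'\le q$ the set $Z=\{n\in q':h_n\text{ is not surjective}\}$ were forced by $q'$ into $\dot U$, then, working in $W$ below $q'$, picking $z(n)\in g(n)\setminus\mathrm{range}(h_n)$ for $n\in Z$ and extending $z$ arbitrarily below $f$ off $Z$, the set $\{n:z(n)\in\mathrm{range}(h_n)\}$ is disjoint from $Z$, so $q'$ forces $[\check z]_{\dot U}$ to miss the range of $\dot h$, contradicting surjectivity; hence $q\subseteq^{*}\{n:h_n\text{ is surjective}\}$. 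But $q\le p$ also forces $\{n:f(n)<g(n)\}\in\dot U$, so $q\subseteq^{*}\{n:f(n)<g(n)\}$ as well, whence for cofinitely many $n\in q$ the map $h_n$ would be a surjection from a finite set onto a strictly larger one---impossible. So no such $h$ exists, which is exactly $(3)\Rightarrow(1)$ in $W[U]$.

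I expect the local determination lemma to be the whole difficulty: one must show that an arbitrary surjection living in $W[U]$ is, below a single condition, literally coordinatewise. The proof should combine (i) the Solovay-model machinery---using that $\dot h$ is ordinal-definable from a real to reflect the configuration into a countable elementary submodel, where the relevant subsets of $[\omega]^\omega$ acquire Ramsey-type regularity---with (ii) a fusion/amalgamation argument on $[\omega]^\omega$ in the spirit of the Ellentuck and Galvin--Prikry theorems and of the Prikry property for Mathias forcing, so as to combine the coordinatewise decisions taken for all the ground-model inputs $a$ into one condition $q$ and one sequence $(h_n)_n$. The rest of the proof is bookkeeping, and I would expect this lemma to absorb essentially all of the paper's technical work.
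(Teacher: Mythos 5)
Your reduction of the hard direction to the ``local determination lemma'' is where the proposal stops being a proof: that lemma is exactly the content of the theorem, and you neither prove it nor make it plausible that your proposed toolkit could. Worse, it is a much stronger statement than what the paper actually establishes --- the paper never shows that a map $\prod_n f(n)/U\rightarrow\prod_n g(n)/U$ is (below a condition) an ultraproduct of coordinate maps $h_n:f(n)\rightarrow g(n)$; it only defeats a single putative injection. To do even that it needs machinery that your sketch does not contain: a purpose-built forcing $\mathbb{P}_{F,f,g}$ whose conditions have widths $|p(n)|$ normed against $g(n)$, the Shelah--Zapletal polarized partition theorem to show this forcing adds no independent reals and preserves the Ramsey ultrafilter, the balanced-condition property of ultrafilters in $[\omega]^\omega$ over the Solovay model to have the (preserved) ultrafilter decide the class of $\tau([x_{gen}])$, Lipschitz reading of names, and finally a fusion producing a ``fat'' $q_\infty$ with $|q_\infty(b_n)|>g(b_n)$ so that a coordinatewise pigeonhole yields two branches $x,y$ with $x=_Uy$ but $h(x)\neq_U h(y)$ (or vice versa). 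Your proposed ingredients --- Solovay reflection plus Ellentuck/Galvin--Prikry/Mathias-style fusion on $[\omega]^\omega$ --- contain no mechanism that ever sees the ratio $f(n)/g(n)$, which is the quantitative engine of the argument; and your lemma additionally demands a single condition handling all ground-model inputs $a$ simultaneously, a uniformization for which you give no argument and whose truth in $W[U]$ is not at all clear. So the crux of the theorem is left open.

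There is also a gap in your $(1)\Rightarrow(2)$ sketch, and it is precisely a choice issue, which is the whole point of the ``provable in ZF + DC'' clause. An isomorphism of the galaxy orders $D_g\cong D_f$ does not ``routinely'' lift to an isomorphism of the linear orders: each nonextreme galaxy is a copy of $\mathbb{Z}$ with no distinguished point, so lifting requires choosing base points in continuum many $\mathbb{Z}$-chains (or a back-and-forth of length continuum, which needs a well-ordering; note also that $\aleph_1$-saturation gives uniqueness only under CH, so that appeal does no work here). The paper avoids this by working with the coarser classes $E_{g,x}$ of the ratio map $m_g(h)=\lim_U h/g$, observing $E_{f,0}=E_{g,0}$ when $0<\lim_U f/g<\infty$, and anchoring each block with the explicit representative $\lceil x f\rceil$, which makes the isomorphism fully definable without any choices. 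Your rescaling map is the right germ of an idea, but as written the lifting step either smuggles in choice or needs the paper's block decomposition.
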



\section{Background and Notation}

We let $<_U$ denote the linear order given by the ultraproduct $\prod_n (\omega, <) / U$. Note that $\prod_n (f(n),<) / U$ is isomorphic to the initial segment of $\prod_n (\omega, <) / U$ with maximal element $[f]$.

For an ultrafilter $U$ on $\omega$, and a sequence $(x_n)_{n < \omega}$ of reals, we define $\lim_U x_n$ to be the unique $x \in \mathbb{R} \cup \{\pm \infty\}$ such that for all open neighborhoods $O$ of $x$, $\{n : x_n \in O\} \in U$. It is provable in ZF + DC that the limit $x$ always exists and is unique, proofs can be found in many texts, for example \cite{HindmanStrauss+2012}, just note that no choice beyond DC is needed.

Fix an inaccessible cardinal $\kappa$ and let $G \subseteq \operatorname{Coll}(\omega, <\kappa)$ be $V$-generic. The inner model $W = \operatorname{HOD}^{V[G]}_{V \cup \mathbb{R}}$ is the Solovay model we will be referring to in this paper. Basic information on the Solovay model can be found in many set theory textbooks, including \cite{jech}. 

The poset $[\omega]^\omega$ is \textit{balanced} over $W$, and its \textit{balanced conditions} are ultrafilters. The theory of balanced forcings over the Solovay model is studied in \cite{GST}. In this paper, we only need the following lemma:

\begin{lem}
    Let $W$ be the Solovay model. If $V[K]$ is an intermediate model and $F \in V[K]$ is an ultrafilter, then when viewed as a condition in $[\omega]^\omega$, $F$ decides all statements in the forcing language of $[\omega]^\omega$ in $V[K]$.
\end{lem}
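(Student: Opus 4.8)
The plan is to identify $F$ with a \emph{balanced virtual condition} for the poset $[\omega]^\omega$ over $W$, in the sense of \cite{GST}, and then to run the standard amalgamation argument showing that balanced conditions decide the forcing language; I work inside $V[K]$ throughout.

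First I would fix the representation of the virtual condition attached to $F$: in any generic extension $V[K][H]$ it is realized by the Mathias real $m$ added by $\mathbb{M}_F$, Mathias forcing relative to $F$, whose conditions are pairs $(s,A)$ with $s\in[\omega]^{<\omega}$, $A\in F$ and $\max s<\min A$. Note that $\mathbb{M}_F\in V[K]$ and that $m$ is a pseudo-intersection of $F$, i.e.\ $m\subseteq^* A$ for every $A\in F$. The main combinatorial point, which I would verify next, is that this virtual condition is balanced: if $m_0\in V[K][H_0]$ and $m_1\in V[K][H_1]$ are two such Mathias reals with $H_0,H_1$ mutually generic over $V[K]$, then $m_0\cap m_1$ is infinite, so $m_0$ and $m_1$ are compatible in $[\omega]^\omega$. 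Indeed, since $m_0$ is $\subseteq^*$-below every member of $F$, the set $A\cap m_0$ is infinite for each $A\in F$; hence, working over $V[K][H_0]$, the sets $\{(s,A)\in\mathbb{M}_F : |s\cap m_0|\ge n\}$ are dense in $\mathbb{M}_F$, and $m_1$ --- which by mutual genericity is $\mathbb{M}_F$-generic over $V[K][H_0]$ --- meets each of them, pushing infinitely many elements of $m_0$ into $m_1$. This is exactly the place where it matters that $F$ is an ultrafilter and not merely a filter.

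Granting balancedness, I would invoke the general fact from \cite{GST} that a balanced condition decides every statement $\phi$ of the forcing language of $[\omega]^\omega$ over $W$: were the condition attached to $F$ to decide neither $\phi$ nor $\neg\phi$, one produces, in mutually generic extensions of $V[K]$, realizations $m_0$ and $m_1$ of the condition with $m_0\Vdash\phi$ and $m_1\Vdash\neg\phi$, forms the common refinement $m_0\cap m_1$ (infinite, by balancedness), and concludes that this one condition forces both $\phi$ and $\neg\phi$, using that the forcing relation is downward closed and that $\phi$ retains its truth value along the auxiliary extensions. Finally, because the virtual condition is represented by objects of $V[K]$ (the ultrafilter $F$ together with $\mathbb{M}_F$), and because, from the standpoint of $V[K]$, forcing with $[\omega]^\omega$ over $W$ is a composition of the quotient of $\operatorname{Coll}(\omega,<\kappa)$ over $V[K]$ with a subsequent definable forcing, the decision is one that $V[K]$ itself makes --- which is the content of the phrase ``in $V[K]$''.

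I expect the most delicate step to be this last piece of absoluteness bookkeeping: one must ensure that $W$, and hence the truth value of $\phi$, is computed correctly and uniformly in each of the mutually generic extensions used to witness a hypothetical failure of decision. This is where the specific features of the Solovay model enter --- the homogeneity of $\operatorname{Coll}(\omega,<\kappa)$ and the definition $W=\operatorname{HOD}^{V[G]}_{V\cup\mathbb{R}}$ --- and where the framework of \cite{GST} does its real work; by contrast, the combinatorial core, that two Mathias reals for a single ultrafilter have infinite intersection, is comparatively soft.
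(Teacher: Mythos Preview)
The paper does not give its own proof of this lemma; it simply records it as a consequence of the theory of balanced forcing in \cite{GST}. Your outline is exactly the argument that reference supplies --- realize $F$ as the Mathias virtual condition, verify balancedness by the infinite-intersection property of mutually generic Mathias reals, and then run the amalgamation/homogeneity argument --- so there is nothing to compare and nothing to correct.
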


In this paper we are concerned with the model $W[U]$, where $U$ is generic for the poset $[\omega]^\omega$. This is one of two natural choiceless models used to study a Ramsey ultrafilter, the other being $L(\mathbb{R})[U]$, where $L(\mathbb{R})$ is a model of $AD$. Both of these models have been studied in the past (see \cite{Barren}, \cite{PSP}). A notable result of Todorcevic says that under sufficient large cardinal assumptions, every Ramsey ultrafilter is $L(\mathbb{R})$ generic over $[\omega]^\omega$ \cite{Farah}. We do not know if our characterization of the cardinalities of ultraproducts of finite sets in $W[U]$ also holds in the model $L(\mathbb{R})[U]$.

\section{Proof of the Main Theorem}
Proving from ZF + DC that $\prod_n (f(n),<)/ U$ and $\prod_n (g(n),<) / U$ are isomorphic when $0 < \lim_U f(n)/g(n) < \infty$ is relatively simple. Note that this holds for any ultrafilter $U$.

\begin{thm}[ZF + DC] If $U$ is an ultrafilter, $f,g: \omega \rightarrow \omega$ are such that $0 < \lim_U \frac{f(n)}{g(n)} < \infty$, and $\prod(f(n),<) / U$ and $\prod(g(n),<) / U$ are infinite, then $\prod(f(n),<) / U \cong \prod(g(n),<) / U$.\\
\end{thm}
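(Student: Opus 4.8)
The plan is to build the isomorphism piecewise along a partition of $\omega$ into $U$-large pieces on which $f(n)/g(n)$ is nearly constant, and then to patch these local order-isomorphisms together into a global one. Let $c = \lim_U f(n)/g(n) \in (0,\infty)$. First I would fix a rational $q = p/r > 0$ close to $c$ and observe that on a $U$-large set $S$ we have, say, $g(n) \le c' f(n)$ and $f(n) \le c'' g(n)$ for suitable constants; the exact constants will not matter because linear orders of "comparable" finite size are order-isomorphic to within an error that gets absorbed in the ultraproduct. Concretely, for each $n$ I want an order-isomorphism between $(f(n),<)$ and $(g(n),<)$ up to a bounded additive error — but that is impossible pointwise since these are finite linear orders of literally different sizes, so the right move is to compare the *ultraproducts* directly rather than the factors.

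So instead I would argue as follows. Since $\prod(f(n),<)/U$ is the initial segment of $\omega^\omega/U$ below $[f]$, it suffices to show $[f]$ and $[g]$ determine initial segments of the *same* order type. Using $0 < \lim_U f(n)/g(n) < \infty$, pick integers $j,k \ge 1$ with $\tfrac{1}{k} < \liminf_U \tfrac{f(n)}{g(n)} \le \limsup_U \tfrac{f(n)}{g(n)} < j$, so that on a $U$-large set $g(n) < j\cdot f(n)$ and $f(n) < k \cdot g(n)$, i.e. $[g] <_U [j\cdot f]$ and $[f] <_U [k \cdot g]$. Thus the initial segment below $[f]$ order-embeds as an initial segment of the one below $[g]$ and vice versa (the segment below $[g]$ sits inside the segment below $[j f]$, which is a finite "union of $j$ copies" of the segment below $[f]$). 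The key structural fact I would isolate and prove is: for any $h:\omega\to\omega$ with $[h]$ infinite and any fixed $m \ge 1$, the initial segment of $\omega^\omega/U$ below $[m\cdot h]$ is order-isomorphic to the initial segment below $[h]$ — because $m\cdot h$ "looks like" $m$ consecutive blocks each order-isomorphic to $[h]$, and an infinite discrete-ish linear order with endpoints absorbs a finite multiple of itself. Granting that, both $[f]$ and $[g]$ have initial segments that embed cofinally-as-initial-segments into one another, and a back-and-forth / Cantor–Schröder–Bernstein argument for initial segments of linear orders (which does hold for well-behaved orders, or can be run directly here using the block structure) yields the isomorphism.

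The main obstacle — and the step I expect to need the most care — is proving the absorption claim "$\prod(m\cdot h(n),<)/U \cong \prod(h(n),<)/U$" constructively in ZF + DC, since naive back-and-forth on linear orders requires choices. I would handle this by giving an *explicit* definable bijection: writing each element below $[m\cdot h]$ in "mixed-radix" form as $[h]\cdot i + [\text{remainder}]$ is not literally available, but one can instead use that $m \cdot h(n) = h(n) + h(n) + \dots + h(n)$ and define, factorwise and uniformly in $n$, an order-isomorphism $(m\cdot h(n), <) \cong$ (a linear order built from $h(n)$ by a fixed finitary rearrangement) that, after passing to the ultraproduct, lands inside the segment below $[h]$ — the point being that the map is given by a single formula applied coordinatewise, so no choice is used and it descends to $U$-classes. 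Then combining the two embeddings: since each of the two initial segments embeds *as an initial segment* into the other, and these segments have greatest elements, a direct induction (definable, hence choiceless) alternately "filling in" from each side produces the isomorphism; I would present this as an explicit construction rather than invoking a CSB-type theorem for orders, to keep everything inside ZF + DC.
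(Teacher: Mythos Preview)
Your plan hinges on the absorption claim $\prod(m\cdot h(n),<)/U \cong \prod(h(n),<)/U$, but neither justification you offer works. The heuristic ``an infinite discrete-ish linear order with endpoints absorbs a finite multiple of itself'' is false: with $L = \omega + \omega^*$ one gets $L + L \cong \omega + \mathbb{Z} + \omega^*$, which has two Dedekind gaps rather than one. Your fallback, a coordinatewise definable bijection, is blocked for the reason you already noted---$(m\cdot h(n),<)$ and $(h(n),<)$ have different sizes, so no factor-by-factor map is an order-isomorphism. The absorption claim is just the special case $f = mh$, $g = h$ of the theorem, and you have produced no independent route to it. The initial-segment Cantor--Schr\"oder--Bernstein step is also false in general: take $L_1 = 1 + (\eta+2)\cdot\omega^*$ and $L_2 = L_1 + \eta + 1$ (with $\eta$ the rational order); each has a least and greatest element, each is isomorphic to a proper initial segment of the other (since $L_2 + 1 \cong L_1$), yet $L_1\not\cong L_2$ because only the maximum of $L_1$ has an immediate predecessor.

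The paper's argument supplies the structural ingredient you are missing. Define $m_g(a)=\lim_U a(n)/g(n)$ for $a\leq_U g$; this is a weakly monotone surjection onto $[0,1]$ whose fiber $E_{g,x}=m_g^{-1}\{x\}$ over any $0<x<1$ is order-isomorphic to $E_{g,0}^*+E_{g,0}$ (translate by any representative), while $E_{g,1}\cong E_{g,0}^*$. Hence the segment below $[g]$ decomposes as $E_{g,0}+(0,1)\times(E_{g,0}^*+E_{g,0})+E_{g,0}^*$ lexicographically, and its order type is determined by $E_{g,0}$ alone. The hypothesis $0<\lim_U f/g<\infty$ says exactly that $E_{f,0}=E_{g,0}$, and an explicit isomorphism---using the definable representatives $\lceil xf\rceil\in E_{f,x}$, so no choice beyond DC---follows at once.
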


\begin{proof}
Fix an ultrafilter $U$. For $f<_U g$, define $m_g(f) = \lim_U \dfrac{f(n)}{g(n)}$.
Note that $m_g$ is a surjective weakly order preserving function onto $[0,1]$, and that if $a <_U b$, then $m_g(a) = m_g(b)$ iff $m_g(b-a) = 0$. Furthermore, if $f<_U g$ is such that $m_g(f) = r > 0$, then $m_g(a) = rm_f(a)$ for all $a <_U f$. This is because $rm_f(a) = \lim_U r\dfrac{a(n)}{f(n)} = \lim_U \dfrac{f(n)}{g(n)} \dfrac{a(n)}{f(n)} = \lim_U \dfrac{a(n)}{g(n)} = m_g(a)$. In particular $m_f(a) = 0$ iff $m_g(a) = 0$.

The function $m_g$ partitions $\{h \leq_U g\}$ into equivalence classes, with one class $E_{g,x}$ for each $x \in [0,1]$. Two functions $a \leq_U b$ are in the same equivalence class if and only if $m_g(b-a) = 0$. So for any $0 < x < 1$ and $a \in E_{g,x}$, we have $E_{g,x} = \{a -c : c \in E_{g,0}\} \cup \{a + c: c \in E_{g,0}\}$. Similarly, $E_{g,1} = E_{g,0}^*$. Therefore
\begin{equation*}
    \{h \leq_U g\} \cong E_{g,0} + (0,1)\times(E_{g,0}^* + E_{g,0}) + E_{g,0}^*,
\end{equation*}
where the product has the lexicographic order.\\
Since $m_g(f) = r >0$, we get $E_{g,0} = E_{f,0}$ and the result follows. In particular, any order preserving bijection $k: [0,r] \rightarrow [0,1]$, along with choices of representatives from $E_{f,x}$ and $E_{g,k(x)}$ for $0 < x <1$ gives an isomorphism. One choice of representative of $E_{f,x}$ is $\lceil xf \rceil$.\\
\end{proof}

To prove that the ultraproducts have different cardinalities in $W[U]$, we introduce the poset $\mathbb{P}_{F,f,g}$ for an ultrafilter $F$ on $\omega$ and $f>_F g$. This poset adds a real in a controlled way, which when combined with basic geometric set theory methods, allows us to break any possible candidate for an injective function $\prod_n f(n)/U \rightarrow \prod_n g(n)/ U$.
Given an ultrafilter $F$ and $f,g :\omega \rightarrow \omega$ with $\lim_F \frac{f}{g} = \infty$, we define the poset
\begin{equation*}
    \mathbb{P}_{F,f,g} = \left\{p: \operatorname{dom}(p) = \omega, p(n) \subseteq (f(n)+1), p(n) \neq \emptyset, \lim_F \frac{|p(n)|}{g(n)} = \infty\right\},
\end{equation*}
with $p \leq q$ if $p(n) \subseteq q(n)$ for all $n$. For a condition $p \in \mathbb{P}_{F,f,g}$, we define $[p]:= \prod_n p(n)$, and for $A \subseteq \omega$ define $[p]\upharpoonright A := \prod_{n \in A}p(n)$.

Useful properties of $\mathbb{P}_{F,f,g}$ are summarized in the following theorem. We use the phrase \textit{Lipschitz}$^*$ \textit{reading of names} to refer to the property in corollary 3.5.
\begin{thm}
    $\mathbb{P}_{F,f,g}$ adds a new $F$-class. If $F$ is a P-point, then $\mathbb{P}_{F,f,g}$ is proper and has continuous reading of names. If $F$ is a rapid P-point, then $\mathbb{P}_{F,f,g}$ adds no independent reals. If $F$ is Ramsey, then $\mathbb{P}_{F,f,g}$ has Lipschitz$^*$ reading of names and preserves the ultrafilter $F$. 
\end{thm}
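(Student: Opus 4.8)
I would treat the four assertions in turn, in increasing order of strength of the hypothesis on $F$; the common engine is a fusion argument, with the P-point, rapidity, and Ramsey properties of $F$ each buying more fusion control. First observe that for every $n$ the set $\{p:|p(n)|=1\}$ is dense, since shrinking one coordinate to a singleton changes a sequence at one place and so does not disturb the $F$-limit $\lim_F|p(n)|/g(n)$; hence the generic filter $G$ yields a point $\dot x_{\mathrm{gen}}$ of the compact space $X:=\prod_n(f(n)+1)$ determined by $\{\dot x_{\mathrm{gen}}(n)\}=\bigcap_{p\in G}p(n)$, and each condition $p\in G$ forces $\dot x_{\mathrm{gen}}\in[p]$. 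For the \emph{new $F$-class}: given a ground-model $h\in X$ and a condition $p$, we have $|p(n)|\ge 2$ for $F$-many $n$ (as $|p(n)|/g(n)\to_F\infty$), so deleting $h(n)$ from $p(n)$ on those $n$ gives $q\le p$ with $q\Vdash\{n:\dot x_{\mathrm{gen}}(n)\ne h(n)\}\in F$, i.e.\ $q\Vdash[\dot x_{\mathrm{gen}}]_F\ne[h]_F$; by density $[\dot x_{\mathrm{gen}}]_F$ is a new $F$-class.

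For \emph{properness and continuous reading of names when $F$ is a P-point}, I would use the standard route through a countable $M\prec H_\theta$ containing $\mathbb{P}_{F,f,g},F,f,g$ and a given $p\in\mathbb{P}_{F,f,g}\cap M$: list in order type $\omega$ all dense sets and all names for reals in $M$, and build inside $M$ a sequence $p=p_0\ge p_1\ge\cdots$ and numbers $m_0<m_1<\cdots$ with $p_{k+1}(n)=p_k(n)$ for $n<m_k$, such that at stage $k$ the $k$-th listed object is handled below \emph{every} node $s\in\prod_{n<m_k}p_k(n)$ --- either $(p_{k+1})_s$ sits below an element of the $k$-th dense set, or $(p_{k+1})_s$ decides the first $k$ bits of the $k$-th name. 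This is arranged by processing the finitely many nodes $s$ one at a time: having extended $(p_k)_s$ to a condition $r$ that handles it, restore $p_k$'s values on the coordinates $<m_k$ and keep $r$'s values on the coordinates $\ge m_k$; since later shrinking only refines the coordinates $\ge m_k$, the already-processed nodes stay handled. Taking $q(n):=\bigcap_k p_k(n)$ (the eventual value at coordinate $n$), a routine check shows $q\le p$, that $q$ is $(M,\mathbb{P}_{F,f,g})$-generic (given $q'\le q$ and a listed dense set $D_k$, any $s\in\prod_{n<m_k}q'(n)$ is one of the nodes handled at stage $k$, so $q'_s$ refines the stage-$k$ witness and is compatible with an element of $D_k\cap M$), and that $q$ forces every listed name to be a continuous function of $\dot x_{\mathrm{gen}}$ on $[q]$ (with modulus $k\mapsto m_k$, coded in $V$). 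So both conclusions come down to the single claim that $q$ is a condition, i.e.\ $\lim_F|q(n)|/g(n)=\infty$.

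This claim is the crux, and the one place the P-point hypothesis is needed. The obstacle is that handling a dense set below a node can force an \emph{unbounded} amount of shrinking at coordinates $\ge m_k$, so no uniform ``keep each surviving coordinate at a fixed fraction of its old size'' bookkeeping is available to make the limit automatically legal --- this is exactly what goes wrong for a merely nonprincipal $F$. The plan is to record, for every intermediate condition $p'$ appearing in the construction and every $j$, the set $\{n:|p'(n)|/g(n)\ge j\}\in F$, note that there are only countably many such sets, take a pseudo-intersection $A\in F$ (here the P-point property is used), and carry out the construction so that the successive shrinkings are compatible with membership in $A$ --- thereby forcing $|q(n)|/g(n)\to\infty$ along $A$, hence along $F$. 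Making this ``compatible with $A$'' clause precise --- diagonalizing the shrinking against a set in $F$ while still handling every dense set below every node --- is the main technical work of the section, and the step I expect to be hardest.

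Finally, the stronger hypotheses refine the above. When $F$ is a \emph{rapid P-point}: given a name $\dot A$ for a subset of $\omega$, first use continuous reading to get a condition $q$ and reading levels $(m_k)$ with $q\Vdash$ ``$\dot A\cap k$ is determined by $\dot x_{\mathrm{gen}}\restriction m_k$''; rapidity supplies $B\in F$ whose increasing enumeration outgrows $(m_k)$, and pushing $B$ down the finitely branching tree below $q$ lets one thin $q$ and extract a ground-model infinite subset of $\dot A$ or of its complement, so $\dot A$ is not independent. When $F$ is \emph{Ramsey}: the selective partition property upgrades the fusion to a Prikry-type pure-decision property for $\mathbb{P}_{F,f,g}$ --- a name for a bit can be decided by shrinking only on a tail of a set in $F$, without the ``freeze a long initial segment'' step --- which yields the tight Lipschitz$^*$ modulus for reading names; and applying the same pure decision to a name $\dot A\subseteq\omega$ via a colouring of pairs (recording whether a pure extension decides $n\in\dot A$ or $n\notin\dot A$) produces, by Ramsey-homogeneity, a set $B\in F$ and a pure extension forcing $\dot A\cap B$ to be all of $B$ or empty; since this holds below any condition, the filter generated by $F$ stays an ultrafilter, i.e.\ $F$ is preserved. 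Besides the P-point diagonalization above, the delicate point here is establishing pure decision for this product-tree forcing from selectivity, the analogue of the Prikry property for Mathias forcing with a Ramsey ultrafilter.
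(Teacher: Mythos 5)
Your opening observations (the generic point, and the ``new $F$-class'' argument by deleting $h(n)$ from $F$-many coordinates) are fine, and your fusion outline for properness and continuous reading is the right general shape; but the step you yourself flag as ``the main technical work'' is a genuine gap, and your proposed fix does not work as stated. Taking a pseudo-intersection of the sets $\{n:|p'(n)|/g(n)\ge j\}$ ``appearing in the construction'' is circular: those sets are produced by a construction that is supposed to already be guided by the pseudo-intersection. Even the natural repair (pseudo-intersect $F\cap M$ for the countable model $M$ in advance) fails for your initial-segment freezing, because the threshold beyond which the pseudo-intersection is contained in $\{n:|p_{k}(n)|/g(n)\ge j\}$ is only known after $p_{k}$ is built, while the coordinates frozen at $p_k$'s values were fixed earlier; the accumulated bad coordinates can be an infinite subset of the pseudo-intersection, and restoring their values destroys the decision needed for continuous reading. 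The paper's solution is different in kind: fusion is packaged as a game $\mathcal{G}_P$ in which the frozen coordinates are \emph{finite sets} $b_i$ chosen by the opponent inside $\{k:|p_i(k)|/g(k)>i\}$ (all other coordinates are collapsed to singletons), and legality of the fused condition is exactly the statement $\bigcup_i b_i\in F$; one then invokes the game characterization of P-points (player I has no winning strategy in the P-point game), translating a strategy in $\mathcal{G}_P$ into one in the classical game. That adaptive, game-theoretic use of the P-point property is the idea your sketch is missing.

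The two stronger hypotheses hide further gaps. For a rapid P-point, ``pushing $B$ down the finitely branching tree below $q$ lets one thin $q$ and extract a ground-model infinite subset of $\dot A$'' is precisely the hard combinatorial core, and the naive bit-by-bit version fails: deciding even one value $h(x)(n)$ of a continuously read name can force you to shrink every coordinate below the reading level (consider a parity colouring of $\prod_{m<m_n}q(m)$), and doing this for infinitely many $n$ in succession destroys the requirement $\lim_F|q'(m)|/g(m)=\infty$. The paper instead imports a nontrivial polarized partition theorem of Shelah and Zapletal for products of finite sets with submeasures: for submeasures increasing fast enough, any Borel partition of $\prod_n\operatorname{dom}(\phi_n)\times\omega$ has a piece containing $\prod_n b_n\times c$ with $c$ infinite and $\phi_n(b_n)$ large, so all the bits indexed by $c$ are decided \emph{simultaneously}; rapidity is used only to find a set in $F$ along which the submeasures $|a|/g(n)$ grow fast enough for that theorem. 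For the Ramsey case, the ``pure decision'' property you posit (deciding a bit while shrinking only on a tail of a set in $F$) is false for this forcing for the same reason --- a bit read from $x\restriction m_n$ cannot be decided without touching coordinates below $m_n$ --- so your preservation argument rests on a property the poset does not have. The paper gets Lipschitz$^*$ reading from the Ramsey variant of the fusion game (singleton moves plus an even/odd interleaving to land in $F$), and gets preservation of $F$ from the fact that a Ramsey ultrafilter meets every $\Sigma^1_1$ dense subset of $[\omega]^\omega$, applied to the $\Sigma^1_1$ set of candidate homogeneous sets, whose density is proved by rerunning the Shelah--Zapletal argument along an arbitrary infinite set. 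So beyond the P-point game, the two external ingredients your proposal lacks are the Shelah--Zapletal partition theorem and the $\Sigma^1_1$-dense-set characterization of Ramsey ultrafilters.
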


Given $\mathbb{P}_{F,f,g}$, let $\mathcal{G}_P$ demote the game where player I plays $p_i \in \mathbb{P}_{U,f,g}$, player II plays $b_i \in [\omega]^{<\omega}$, and player I is required to play so that $p_j \upharpoonright b_i = p_i \upharpoonright b_i$ for all $j > i$. Player I wins if the sequence $p_0 \geq p_1 \geq \cdots$ has no lower bound.\\
Let $\mathcal{G}_R$ denote the game where $b_i \in \omega$ instead. These games are simple variations of the standard P-Point and Ramsey games on an ultrafilter, and they allow us to give simple proofs of both the properness and the continuous reading of names of $\mathbb{P}_{F,f,g}$.

\begin{lem}
    If $F$ is a P-point (Ramsey), then player I has no winning strategy in $\mathcal{G}_P$ ($\mathcal{G}_R$).
\end{lem}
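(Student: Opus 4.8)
The plan is to reduce each case to the classical game-theoretic characterizations of P-points and of Ramsey ultrafilters. Recall the classical fact that if $F$ is a P-point then Player I has no winning strategy in the game $\mathcal{H}$ in which Player I plays a $\subseteq$-decreasing sequence $A_0 \supseteq A_1 \supseteq \cdots$ of members of $F$, Player II plays finite sets $s_i \subseteq A_i$ with $\max s_i < \min s_{i+1}$, and Player II wins iff $\bigcup_i s_i \in F$; and that if $F$ is Ramsey the same holds for the variant $\mathcal{H}'$ in which Player II plays single elements $e_i \in A_i$ with $e_i < e_{i+1}$ and wins iff $\{e_i : i \in \omega\} \in F$. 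I would carry out the P-point case in detail and indicate the parallel Ramsey case at the end.

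So let $\sigma$ be a strategy for Player I in $\mathcal{G}_P$; the goal is to show it is not winning. First I would convert $\sigma$ into a strategy $\sigma'$ for Player I in $\mathcal{H}$: in a position of $\mathcal{H}$ where Player II has played $s_0, \dots, s_{i-1}$, set $p_i := \sigma(\langle s_0, \dots, s_{i-1}\rangle)$, the move $\sigma$ dictates in $\mathcal{G}_P$ after the Player II moves $b_j = s_j$, and have $\sigma'$ play
\[
A_i \;:=\; \{\, n \in \omega : |p_i(n)| \ge i\cdot g(n)\,\}.
\]
Since $\lim_F |p_i(n)|/g(n) = \infty$ each $A_i$ is in $F$, and since the $p_i$ are $\le$-decreasing the $A_i$ are $\subseteq$-decreasing, so $\sigma'$ is legal. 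As $F$ is a P-point, $\sigma'$ is not winning, so fix a run $\langle A_0, s_0, A_1, s_1, \dots\rangle$ of $\mathcal{H}$ consistent with $\sigma'$ in which Player II wins, i.e.\ $\bigcup_i s_i \in F$. Read as a run of $\mathcal{G}_P$ with Player II playing $b_i := s_i$, this is a legal run consistent with $\sigma$, yielding conditions $p_0 \ge p_1 \ge \cdots$.

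It remains to produce a lower bound for this run, which then certifies that Player I loses it. I would define $p$ coordinatewise by $p(n) := p_i(n)$ for the unique $i$ with $n \in b_i$ (the $s_i$ being pairwise disjoint) and $p(n) := \bigcap_i p_i(n)$ for the coordinates Player II never freezes; each $p(n)$ is a nonempty subset of $f(n)+1$. That $p \le p_i$ for all $i$ is a short check from the rule of $\mathcal{G}_P$, which forces $p_k(n) = p_j(n)$ for $k \ge j$ once $n \in b_j$. The point of inserting the index $i$ into the threshold is that $n \in b_i$ forces $|p(n)| = |p_i(n)| \ge i\cdot g(n)$, whence for every $m$ one has $\{n : |p(n)| \ge m\cdot g(n)\} \supseteq \bigcup_{i \ge m} b_i = (\bigcup_i b_i) \setminus \bigcup_{i<m} b_i$, which lies in $F$ because $\bigcup_i b_i \in F$ and $F$ is nonprincipal; hence $\lim_F |p(n)|/g(n) = \infty$ and $p \in \mathbb{P}_{F,f,g}$. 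The Ramsey case is identical with $\mathcal{H}'$ replacing $\mathcal{H}$: feed $A_i$ to the auxiliary Ramsey game, receive $e_i \in A_i$, and have Player II make its $i$-th move so that coordinate $e_i$ is newly frozen at stage $i$; then $|p(e_i)| \ge i\cdot g(e_i)$, and since $\{e_i : i\} \in F$ the same computation gives $\lim_F |p(n)|/g(n) = \infty$.

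The argument is thus a bookkeeping wrapper around a single external input — the classical fact that Player I has no winning strategy in $\mathcal{H}$ (resp.\ $\mathcal{H}'$) when $F$ is a P-point (resp.\ Ramsey) — which is where the combinatorics of the ultrafilter is actually used and where all the diagonalization lives. The only place the poset $\mathbb{P}_{F,f,g}$ itself enters is the choice of $A_i$, and I expect the only real care needed is in verifying that the transferred strategy and the translated run are legal, in particular handling correctly the coordinates Player II never freezes (which is why the lower bound is given in two cases).
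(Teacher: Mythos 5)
Your proof is correct and takes essentially the same approach as the paper: convert $\sigma$ into a strategy for the classical P-point (resp.\ Ramsey) game by playing the $F$-large sets $A_i=\{n: |p_i(n)|\ge i\cdot g(n)\}$, take a run won by Player II, and assemble a lower bound coordinatewise, with the index built into the threshold guaranteeing $\lim_F |p(n)|/g(n)=\infty$. The only cosmetic difference is on coordinates Player II never freezes, where the paper shrinks to a singleton while you take $\bigcap_i p_i(n)$ (nonempty since a decreasing sequence of nonempty finite sets stabilizes).
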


\begin{proof}
    We show the P-point case, the Ramsey case is similar.
    Recall the P-point game $\mathcal{G}'_P$ associated to $F$, where player I plays decreasing $X_i \in F$, player II plays $b_i \in [X_i]^{< \omega}$, and player II wins if $\bigcup b_i \in F$. Player I has a winning strategy if and only if $F$ is not a P-point.\\
    Suppose $F$ is a P-point and let $\sigma$ be a strategy for player I in $\mathcal{G}_P$. Define $\sigma'$, a $\mathcal{G_P'}$ player I strategy as follows. Given $\sigma(b_0, b_1, \dots, b_{n-1}) = p_n$, let $\sigma'(b_0, b_1, \dots, b_{n-1}) = \{k: \dfrac{|p_n(k)|}{g(k)} > n\}$. Since $F$ is a P-point, player II has a play $(b_i)_{i<\omega}$ which beats $\sigma'$. Let $p_0 \geq p_1 \geq \cdots$ be the sequence produced when $(b_i)_{i<\omega}$ is played against $\sigma$. Define $p_{\infty}$ by $p_\infty \upharpoonright b_i = p_i \upharpoonright b_i$ for all $i$, and for $n \notin \bigcup b_i$, $p_\infty (n)$ is the least singleton such that $p_\infty(n) \subseteq p_i(n)$ for all $n$. If $k \in \bigcup_{i>m} b_i$, then $\dfrac{p_\infty(k)}{g(k)} > m$, and $\bigcup_{i>m} b_i \in F$ since $(b_i)_{i < \omega}$ beats $\sigma'$, so $p_\infty \in \mathbb{P}_{F,f,g}$, and $(b_i)_{i < \omega}$ beats $\sigma$ as well. Therefore $\sigma$ is not winning.
\end{proof}

\begin{cor}
    If $F$ is a P-point, then $\mathbb{P}_{F,f,g}$ is proper.
\end{cor}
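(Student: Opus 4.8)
The plan is to prove properness in the usual way, by producing generic conditions over countable elementary submodels, and to use the game $\mathcal{G}_P$ together with Lemma 3.4 to supply the lower bound at the end. Fix a large regular $\theta$, a countable $M \prec H_\theta$ containing $\mathbb{P}_{F,f,g}$, $F$, $f$, $g$, and a condition $p \in \mathbb{P}_{F,f,g} \cap M$. It suffices to build $q \le p$ that is $(M,\mathbb{P}_{F,f,g})$-generic, i.e.\ such that $D \cap M$ is predense below $q$ for every dense open $D \subseteq \mathbb{P}_{F,f,g}$ lying in $M$; enumerate these as $\langle D_n : n < \omega\rangle$ (the enumeration itself need not be in $M$).

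I would describe a strategy $\sigma$ for player I in $\mathcal{G}_P$, all of whose moves are conditions in $M$, beginning with $p_0 = p$, such that the coordinatewise intersection of any descending sequence arising from a run following $\sigma$ is $(M,\mathbb{P}_{F,f,g})$-generic below $p$ whenever it is a condition. At stage $n$, having seen II's moves $b_0,\dots,b_{n-1}$ and writing $B_n := \bigcup_{j<n} b_j$ for the finite set of already-frozen coordinates, player I plays $p_n \le p_{n-1}$ with $p_n \upharpoonright B_n = p_{n-1}\upharpoonright B_n$ which \emph{seals} $D_{n-1}$ relative to $B_n$: for every $x \in \prod_{k \in B_n} p_{n-1}(k)$ there is $e^x \in D_{n-1}\cap M$ with $x(k) \in e^x(k)$ for $k \in B_n$ and $p_n(k) \subseteq e^x(k)$ for $k \notin B_n$. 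Such a $p_n$ is produced by running through the finitely many $x$'s in turn: pin the $B_n$-coordinates of the current condition to $x$, use density inside $M$ to descend into $D_{n-1}\cap M$ picking up $e^x$, adopt $e^x$ off $B_n$, and pass to the next $x$; the resulting $p_n$ agrees off $B_n$ with the last $e^x$ chosen, hence lies below every $e^x$ off $B_n$, and is automatically a member of $\mathbb{P}_{F,f,g}$ because it coincides off the \emph{finite} set $B_n$ with an element of $D_{n-1} \subseteq \mathbb{P}_{F,f,g}$, which has $\lim_F |e^x(k)|/g(k) = \infty$. Now suppose a run following $\sigma$ produces a descending sequence with a lower bound, and let $q$ be its coordinatewise intersection, so $q \le p_n$ and $q \upharpoonright B_n = p_n \upharpoonright B_n$ for all $n$. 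To see $D_{n-1}\cap M$ is predense below $q$: given $r \le q$, choose $x \in \prod_{k\in B_n} r(k) \subseteq \prod_{k\in B_n} p_{n-1}(k)$ and check $r$ is compatible with $e^x \in D_{n-1}\cap M$ — on $B_n$ one has $x(k) \in r(k)\cap e^x(k)$, while for $k \notin B_n$, $e^x(k) \supseteq p_n(k) \supseteq q(k) \supseteq r(k)$, so $r(k)\cap e^x(k) = r(k)$ and $\lim_F |r(k)\cap e^x(k)|/g(k) = \lim_F |r(k)|/g(k) = \infty$. Finally, by Lemma 3.4, since $F$ is a P-point player I has no winning strategy in $\mathcal{G}_P$, so $\sigma$ is not winning; the corresponding run yields a descending sequence with a lower bound, whose coordinatewise intersection $q$ is, by the above, $(M,\mathbb{P}_{F,f,g})$-generic below $p$. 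Hence $\mathbb{P}_{F,f,g}$ is proper.

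The point requiring care — and the reason the seal is phrased through the inclusion $p_n(k)\subseteq e^x(k)$ rather than equality — is that it must survive both the subsequent freezing of further coordinates by player II (which only enlarges the $B_m$'s) and the passage to an \emph{arbitrary} $r \le q$, which may also shrink frozen coordinates; the inclusion formulation is exactly what makes the compatibility check above go through uniformly. Note that no ``largeness'' bookkeeping is needed at the individual stages: keeping $\lim_F |p_n(k)|/g(k) = \infty$ is free because each $p_n$ is a member of $D_{n-1}$ off a finite set, and all the work of preserving largeness has been pushed into the existence of the lower bound, which is precisely what Lemma 3.4 and the P-point hypothesis deliver. This mirrors the classical proof that Mathias forcing guided by a P-point is proper.
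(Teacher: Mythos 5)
Your proof is correct and follows essentially the same route as the paper: you build a player I strategy in $\mathcal{G}_P$ that, at each stage, runs through the finitely many possible values on the frozen coordinates and descends into the current dense set inside $M$, then invoke Lemma 3.4 to get a defeated run whose fusion is the master condition. The only difference is that you spell out the verification that the fusion is $(M,\mathbb{P}_{F,f,g})$-generic, which the paper leaves as ``clearly.''
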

\begin{proof}
    Let $M \prec H(\theta)$ be countable elementary and let $p \in M$. Let $(D_i)_{i < \omega}$ be an enumeration of the open dense subsets of $\mathbb{P}_{F,f,g}$ in $M$.\\
    We construct a $\mathcal{G}_P$ strategy $\sigma$ as follows. Let $p_s$ denote $\sigma(s)$. Let $p_{\emptyset} \leq p$ be such that $p_\emptyset \in D_0 \cap M$. Suppose we have $s = (b_0,\dots, b_n)$ and $p_{s\upharpoonright n}$. Enumerate the finitely many $t \in \omega^{(b_0 \cup \cdots \cup b_n)}$ such that $t(i) \in p_{s \upharpoonright n}(i)$ as $t_0, \dots, t_k$. Let $p_{s \upharpoonright n} \geq p^{t_0}_s \geq p^{t_1}_s \geq \cdots \geq p^{t_k}_s$ be such that if $p^{t_j}_s$ is strengthened to agree with $t_j$ on $b_0 \cup \cdots \cup b_n$, then the resulting condition is in $D_n \cap M$. Then let $p_s = p_{s}^{t_k}$.\\
    Since $\sigma$ is not winning, there is a play $(b_i)$ which beats it and produces the condition $p_{\infty}$, which is clearly a master condition for $M$.
\end{proof}


\begin{cor}
    If $F$ is a P-point, then $\mathbb{P}_{F,f,g}$ has continuous reading of names, that is if $p \Vdash \tau \in \omega^\omega$, then there exists a condition $q\leq p$ and a continuous function $h:[q] \rightarrow \omega^\omega$ such that $q \Vdash h(\dot{x}_{gen}) = \tau$.\\
    Furthermore, if $F$ is Ramsey, then we can take these $q$ and $h$ to be such that there exists some $B \in F$ such that $h \upharpoonright B: [q] \upharpoonright B \rightarrow \omega^B$ is Lipschitz.
\end{cor}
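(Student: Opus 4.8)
The plan is to adapt the proof of Corollary 3.4, with the task ``meet the dense set $D_n$'' replaced by ``decide $\tau(n)$ uniformly over all finite approximations to $\dot{x}_{gen}$.'' Fix $p\Vdash\tau\in\omega^\omega$ and describe a Player I strategy $\sigma$ for $\mathcal{G}_P$, writing $p_s=\sigma(s)$; open with any $p_\emptyset\le p$. Given $s=(b_0,\dots,b_n)$ and the previous move $p_{s\upharpoonright n}$, set $c_n:=b_0\cup\cdots\cup b_n$ and list as $t_0,\dots,t_m$ the finitely many selections $t:c_n\to\omega$ with $t(k)\in p_{s\upharpoonright n}(k)$. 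Build $p_{s\upharpoonright n}=q^0\ge q^1\ge\cdots\ge q^{m+1}$ so that every $q^j$ agrees with $p_{s\upharpoonright n}$ on $c_n$ and, for each $j$, the condition obtained from $q^{j+1}$ by replacing its value at each $k\in c_n$ with $\{t_j(k)\}$ decides $\tau(n)$. This is possible: that singleton-condition is legitimate (only the finitely many coordinates in $c_n$ are changed), below it some condition $r$ decides $\tau(n)$, and from $r$ we recover $q^{j+1}$ by resetting the $c_n$-coordinates to their values in $p_{s\upharpoonright n}$ (again a finite change, so the $\lim_F$ clause persists). Since decisions persist downward and successive $q^j$ differ only off $c_n$, the condition $p_s:=q^{m+1}$ has the key property that for every selection $t:c_n\to\omega$ with $t(k)\in p_s(k)$, the condition obtained from $p_s$ by turning its $c_n$-coordinates into the singletons $\{t(k)\}$ decides $\tau(n)$. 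As in Lemma 3.3, reduce $\sigma$ to a strategy in the auxiliary P-point game by associating to each move $p_s$ the set $\{k:|p_s(k)|/g(k)>|s|\}\in F$.

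Since $F$ is a P-point, Lemma 3.3 yields a Player II run $(b_i)_{i<\omega}$ beating $\sigma$; let $p_0\ge p_1\ge\cdots$ be the corresponding Player I moves and let $q:=p_\infty$ be the canonical lower bound (agreeing with $p_i$ on $b_i$, and equal to a fixed element of $\bigcap_i p_i(k)$ off $\bigcup_i b_i$). By the argument of Lemma 3.3 the bookkeeping forces $\lim_F|q(k)|/g(k)=\infty$, so $q\in\mathbb{P}_{F,f,g}$, and $q\le p_\emptyset\le p$. Define $h:[q]\to\omega^\omega$ by letting $h(x)(n)$ be the value forced on $\tau(n)$ by the condition obtained from the Player I move $\sigma(b_0,\dots,b_n)$ by replacing its value at each $k\in c_n$ with $\{x(k)\}$; this is well defined because $q$ and $\sigma(b_0,\dots,b_n)$ agree on $c_n$ (so $x\upharpoonright c_n$ is a legal selection) and that condition decides $\tau(n)$ by the key property. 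As $h(x)(n)$ depends only on the finite tuple $x\upharpoonright c_n$, $h$ is continuous. Finally $q\Vdash h(\dot{x}_{gen})=\tau$: in a generic $G\ni q$ with associated real $x_{gen}$, for each $n$ the conditions below $q$ that are singletons on all of $c_n$ are dense below $q$, so some such $r$ lies in $G$; then $r$ refines the condition obtained from $\sigma(b_0,\dots,b_n)$ by replacing its $c_n$-coordinates with $\{x_{gen}(k)\}$ (using that $q\le\sigma(b_0,\dots,b_n)$ off $c_n$), which forces $\tau(n)=h(x_{gen})(n)$, so $\tau^G(n)=h(x_{gen})(n)$.

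For the Ramsey refinement, run the same construction over $\mathcal{G}_R$, so Player II plays naturals $b_0<b_1<\cdots$ and $c_n=\{b_0,\dots,b_n\}$, strengthening at stage $n$ so that each singleton-selection on $c_n$ decides all of $\tau\upharpoonright(b_n+1)$ rather than just $\tau(n)$ --- still finitely many tasks, and every coordinate that gets shrunk lies off $c_n$, hence is never frozen since Player II enumerates exactly $B:=\{b_i:i<\omega\}$. Lemma 3.3 (Ramsey case) gives a beating run with $B\in F$; build $q$ and continuous $h$ as before. Because $b_k\le b_N$ precisely when $k\le N$, the value $\tau(b_k)$ is decided already at stage $k$, so $h(x)(b_k)$ depends only on $x\upharpoonright\{b_0,\dots,b_k\}$; transported along the increasing enumeration of $B$, this says exactly that $h\upharpoonright B:[q]\upharpoonright B\to\omega^B$ is Lipschitz. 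The only real work is the bookkeeping of the first paragraph --- decide enough of $\tau$ uniformly over all finite approximations to $\dot{x}_{gen}$ without ever touching Player II's frozen coordinates, and thread the $|p(k)|/g(k)>|s|$ estimate through so the fusion $q$ genuinely lands in $\mathbb{P}_{F,f,g}$ --- which goes exactly as in Lemma 3.3 and Corollary 3.4.
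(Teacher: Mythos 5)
Your proposal is correct and follows essentially the same route as the paper: rerun the properness fusion game with the dense-set tasks replaced by ``decide $\tau(n)$ for every singleton selection on the frozen coordinates,'' read $h$ off the decisions (continuity being automatic since $h(x)(n)$ depends on $x\upharpoonright c_n$), and use the Ramsey game with singleton moves for the Lipschitz refinement. The only divergence is cosmetic: in the Ramsey case you strengthen the stage-$n$ requirement to decide all of $\tau\upharpoonright(b_n+1)$ so that $B=\{b_i: i<\omega\}$ itself witnesses Lipschitzness, whereas the paper arranges only the off-by-one decision and then passes to whichever of the even- or odd-indexed halves of $\{b_i\}$ lies in $F$; both devices are legitimate and rest on the same fact, implicit in the proof of Lemma 3.3, that the beating run can be taken with $\bigcup_i b_i \in F$.
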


\begin{proof}
    Let $p \Vdash \tau \in \omega^\omega$. By repeating the proof of properness with $D_i$ being the set of conditions that decide $\tau(i)$, we get a sequence of finite sets $(b_i)_{i<\omega}$ and a condition $p_{\infty} \leq p$ such that deciding $p_\infty$ on $\bigcup_{i<k} b_i$ decides $\tau(k)$. So we can define $h:[p_\infty] \rightarrow \omega^\omega$ by letting $h(x)(n)$ be the value that $\tau(n)$ is forced to take by 
    conditions $r \leq p_\infty$ with $r(j) = \{x(j)\}$ for $j \in \bigcup_{i < n}b_i$.
    
    If $F$ is Ramsey, then note that we can take the $b_i$ to be an increasing sequence of singletons, and by slightly modifying the proof of properness we can ensure that deciding $p_\infty$ up to $b_{i+1}$ decides $\tau$ up to $b_i$. Then one of $\bigcup_{i<\omega} b_{2i}$ or $\bigcup_{i < \omega} b_{2i+1}$ must belong to the ultrafilter, call that set $B$. Then any $q \leq p_\infty$ which decides all values outside of $B$ with $h:[q] \rightarrow \omega^\omega$ defined as above works.
\end{proof}

We use the following theorem of Shelah and Zapeltal to prove the remaining desired properties of $\mathbb{P}_{F,f,g}$.

\begin{thm}[Shelah, Zapletal] (\cite{Sh:952}) Suppose $k \in \omega$ and $r_n : n \in \omega$ is a sequence of real numbers. Then for every sequence of submeasures $\phi_n : n \in \omega$ on finite sets, increasing fast enough, and for every partition $B_i: i \in k$ of the product $\prod_n \operatorname{dom}(\phi_n) \times \omega$ into Borel pieces, one of the pieces contains a product of the form $\prod_n b_n \times c$ where $c \subseteq \omega$ is an infinite set, and $b_n \subseteq \operatorname{dom}(\phi_n)$ and $\phi_n(b_n) > r_n$ for every number $n \in \omega$.
\end{thm}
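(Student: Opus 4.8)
The plan is to read this as a polarized partition theorem --- a Halpern--L\"{a}uchli/Milliken-style statement for the space $X = \prod_n \operatorname{dom}(\phi_n) \times \omega$ --- and to prove it in the standard shape: reduce to two colors, isolate a pigeonhole for submeasures as the combinatorial core, and upgrade from that core to arbitrary Borel pieces by a genericity/Baire-category argument. The reduction to $k = 2$ is routine: apply the two-color case to $B_0$ against $B_1 \cup \cdots \cup B_{k-1}$ and recurse inside the resulting box, inflating the $r_n$ slightly at each of the at most $k-1$ stages so that the ``increasing fast enough'' hypothesis survives the repeated restriction --- legitimate, since the growth threshold is ours to choose in terms of $k$ and $\vec r$.

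The combinatorial nucleus is the following pigeonhole: since $\phi_n$ is subadditive, any partition of $\operatorname{dom}(\phi_n)$ into $t$ pieces has a piece $b$ with $\phi_n(b) \ge \phi_n(\operatorname{dom}(\phi_n))/t$, so provided $\phi_n(\operatorname{dom}(\phi_n))$ is large relative to $t$ and $r_n$ we keep $\phi_n(b) > r_n$. The subtlety --- and the real reason the submeasures must grow fast --- is that making a color (or ``fiber pattern'') constant coordinate by coordinate causes the number of patterns to be killed at coordinate $n$ to blow up in terms of the later coordinates; indeed, a clopen subset of $\prod_n \operatorname{dom}(\phi_n)$ depends on only finitely many coordinates, and for such a set a naive recursion would need the threshold at the first coordinate to dominate a tower function of that many coordinates, which no fixed sequence of submeasures can do. One escapes this by exploiting that there are \emph{infinitely many} coordinates, together with the fact that the pieces are Borel and hence can be read continuously: one runs a fusion that stabilizes the color of longer and longer initial segments $\vec x \upharpoonright n$ while shrinking only finitely many coordinates at a time and spending only a controlled, summable amount of submeasure at each step, threading the $\omega$-coordinate through via Mathias-style reservoirs and Ramsey's theorem and taking the final $c$ to be a diagonal intersection.

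Concretely I would pursue one of three routes. (a) Run the fusion inside the side-by-side creature forcing for the $\phi_n$'s in product with Mathias forcing, show it is proper with a box-respecting continuous reading of ground-model Borel names, and read the monochromatic box off a condition deciding ``$\langle \dot x_{gen}, \dot m_{gen}\rangle \in B_i$''. (b) Build an idempotent ultrafilter on a suitable semigroup of finite approximations --- a Galvin--Glazer-style argument, which also explains why idempotent ultrafilters are natural here --- and extract the box from a member of that ultrafilter. (c) Exhibit the family of boxes as a topological Ramsey space in the sense of Todorcevic and invoke the abstract Ellentuck theorem, at which point everything reduces to verifying the pigeonhole above as the pigeonhole axiom.

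I expect the main obstacle to be exactly this fusion/bookkeeping step: arranging an infinite schedule of shrinkings so that (i) every finite piece of data about a point of a final box $\prod_n b_n \times c$ --- finitely many coordinates of $\vec x$, and which elements lie in $c$ --- has been decided with respect to $B_i$, while (ii) each coordinate $n$ is shrunk only finitely often (or, if infinitely often, only ``gently''), so that $\phi_n(b_n)$ never drops to $r_n$. This is where ``increasing fast enough'' is made quantitative: given $\vec r$, $k$, and the fusion schedule (say coordinate $n$ is first touched at stage $k_n$), one requires $\phi_n(\operatorname{dom}(\phi_n))$ to dominate the corresponding rapidly growing, schedule-dependent function of $n$. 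Once this is in hand, passing from clopen or open pieces to arbitrary Borel pieces is routine --- it follows from the Baire property of Borel sets in the box-topology, equivalently from the fact that a sufficiently generic box decides every ground-model Borel statement about the generic point, just as Galvin--Prikry follows from the Nash--Williams/clopen case together with Ellentuck's theorem.
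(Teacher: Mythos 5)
You should first note that the paper does not prove this statement at all: it is quoted as an external result of Shelah and Zapletal (\cite{Sh:952}) and used as a black box in Lemmas 3.7 and 3.8, so there is no in-paper argument to compare yours against. Judged on its own terms, what you have written is a research plan rather than a proof, and the step it leaves open is essentially the entire content of the theorem. The subadditivity pigeonhole (some cell of a $t$-cell partition has $\phi_n$-mass at least $\phi_n(\operatorname{dom}(\phi_n))/t$) only helps when the number $t$ of cells is fixed in advance and small relative to $\phi_n(\operatorname{dom}(\phi_n))$; but when you try to stabilize the color as a function of coordinate $n$, the number of ``fiber patterns'' to be separated depends on the unshrunk tail $\prod_{m>n}\operatorname{dom}(\phi_m)\times\omega$, which is exactly the tower-function obstruction you yourself identify. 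You then assert that infinitely many coordinates plus continuity of the coloring ``escape'' this, but no mechanism is supplied: the fusion schedule, the summable submeasure loss per stage, and the Mathias/Ramsey handling of the $\omega$-coordinate are named, not constructed.

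Each of your three routes conceals the same difficulty rather than resolving it. Route (a) requires proving properness and a box-respecting continuous reading of names for the side-by-side creature forcing, and in \cite{Sh:952} this rests on a genuinely nontrivial finitary amalgamation/concentration lemma for submeasures, not on bare subadditivity; route (c) requires verifying precisely that pigeonhole as the axiom of an abstract Ellentuck theorem; and route (b) offers no candidate semigroup operation on finite approximations that preserves submeasure-largeness, so Galvin--Glazer does not obviously get off the ground. Similarly, the closing claim that passing from clopen to arbitrary Borel pieces is ``routine'' by Baire category presupposes an Ellentuck-type topology on these boxes whose Ramsey property is again tantamount to the theorem being proved. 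The reduction to $k=2$ at the start is fine, but everything after it is a statement of intent; since the paper treats this result as a citation, the honest options are to do the same or to reproduce the Shelah--Zapletal argument in full.
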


\begin{lem}
    If $F$ is a rapid P-point, then $\mathbb{P}_{F,f,g}$ does not add independent reals.
\end{lem}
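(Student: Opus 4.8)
The plan follows the standard route for showing a definable forcing adds no independent reals: reduce the name to a continuously read Borel function and then invoke the Shelah--Zapletal rectangle theorem (Theorem 3.6) to produce a homogeneous sub-box together with an infinite ground-model set. Fix a condition $p$ and a name $\dot A$ with $p \Vdash \dot A \subseteq \omega$; it suffices to find $q' \le p$ in $\mathbb{P}_{F,f,g}$ and an infinite $c$ in the ground model with $q' \Vdash c \subseteq \dot A$ or $q' \Vdash c \cap \dot A = \emptyset$, since this witnesses that $\dot A$ is not independent. As a rapid P-point is in particular a P-point, Corollary 3.5 gives $q \le p$, a continuous $h \colon [q] \to 2^{\omega}$ with $q \Vdash h(\dot{x}_{gen}) = \dot A$, and (from its proof) finite sets $b_i$, $i<\omega$, such that $D := \bigcup_i b_i \in F$, each tail $\bigcup_{i>m} b_i \in F$, one has $|q(m)|/g(m) > i$ for $m \in b_i$, and $h(x)(l)$ depends only on $x \upharpoonright (b_0 \cup \cdots \cup b_{l-1})$. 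In particular $h$ factors as $h = \bar h \circ \pi$, where $\pi \colon [q] \to [q]\upharpoonright D = \prod_i\bigl(\prod_{m \in b_i} q(m)\bigr)$ is the projection and $\bar h$ is continuous.

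Next I apply Theorem 3.6 on the space $\prod_i \operatorname{dom}(\phi_i) \times \omega$ with $\operatorname{dom}(\phi_i) := \prod_{m \in b_i} q(m)$, a large finite set since each side exceeds $i\, g(m)$, and with submeasures $\phi_i$ adapted to the block structure: one must choose $\phi_i$ so that $\phi_i(e) > r_i$ guarantees, inside $e$, a full-dimensional sub-box $\prod_{m \in b_i} q'(m)$ with $\min_{m \in b_i} |q'(m)|/g(m) \to \infty$ as $i \to \infty$, while the sequence $(\phi_i)$ still "increases fast enough" for Theorem 3.6 (here $k = 2$ and $(r_i)$ are fixed in advance). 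Applying Theorem 3.6 to the Borel partition $C_0 \sqcup C_1$ given by $\bigl((x_i)_i, l\bigr) \in C_j \iff \bar h\bigl((x_i)_i\bigr)(l) = j$ yields $j \in 2$, an infinite $c \subseteq \omega$, and sets $e_i$ with $\phi_i(e_i) > r_i$ such that $\bar h(y)(l) = j$ for all $y \in \prod_i e_i$ and all $l \in c$. Taking the sub-box $\prod_{m \in b_i} q'(m) \subseteq e_i$ supplied by the choice of $\phi_i$ and putting $q'(m) := q(m)$ for $m \notin D$, one gets $q' \le q$, and $q' \in \mathbb{P}_{F,f,g}$: each $q'(m)$ is nonempty, and $\lim_F |q'(m)|/g(m) = \infty$ because $D \in F$ and $|q'(m)|/g(m) \to \infty$ along $D$. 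Since $[q']\upharpoonright D \subseteq \prod_i e_i$, for every $y \in [q']$ and $l \in c$ we have $h(y)(l) = \bar h(\pi(y))(l) = j$, so $q' \Vdash h(\dot{x}_{gen})(l) = j$ for all $l \in c$, i.e. $q' \Vdash c \subseteq \dot A$ if $j = 1$ and $q' \Vdash c \cap \dot A = \emptyset$ if $j = 0$. As $c$ is an infinite ground-model set and $p$, $\dot A$ were arbitrary, $\mathbb{P}_{F,f,g}$ adds no independent reals.

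The heart of the matter, and the step I expect to be hardest, is the choice of the block submeasures $\phi_i$ meeting both requirements in the second paragraph at once: "$\phi_i(e) > r_i$" must force a full-dimensional combinatorial sub-box with controlled side lengths (a density-type statement about subsets of finite products, which is exactly the combinatorial content Theorem 3.6 is built to exploit), and simultaneously the $\phi_i$ must increase fast enough, which demands sufficient quantitative control on the sizes $|q(m)|$ and on the block structure produced by the reading of names. It is precisely here that rapidity is used rather than mere P-point-ness: for a general P-point the continuous reading of names may require large blocks $b_i$, so the naive coordinatewise submeasures $e \mapsto |e|/g(m)$ need not increase fast enough; rapidity, however, lets one locate inside $F$ sets along which $|q(m)|/g(m)$ grows as quickly as needed, and it is this extra leverage that makes the block submeasures large enough for Theorem 3.6 to apply.
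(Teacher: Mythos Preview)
Your proposal has the right high-level shape but routes everything through the finite blocks $b_i$ produced by continuous reading of names, and this creates exactly the gap you flag in your last paragraph: there is no evident submeasure $\phi_i$ on the product $\prod_{m\in b_i} q(m)$ for which $\phi_i(e)>r_i$ forces $e$ to contain a full-dimensional sub-box with controlled side lengths. The obvious candidates fail (normalized counting measure does not guarantee sub-boxes; the ``largest sub-box'' functional is not subadditive), so this is a genuine missing step, not a routine verification.

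The paper sidesteps the whole issue. The point you are missing is that once $h:[p]\to 2^\omega$ is continuous you do not need the block structure at all: deciding $p$ to singletons on any set of coordinates makes $h$ factor through the remaining ones. So the paper works coordinatewise from the start, with the obvious submeasures $\phi_n(a)=|a|/g(n)$ on the individual sets $p(n)$. Rapidity is then used in a much simpler way than you propose: since $\lim_F |p(n)|/g(n)=\infty$, rapidity of $F$ produces a set $\{n_j:j<\omega\}\in F$ along which the $\phi_{n_j}$ grow fast enough for Theorem~3.6. Strengthen $p$ to singletons off $\{n_j\}$, regard $h$ as a map $\prod_j p(n_j)\to 2^\omega$, and apply Theorem~3.6 to the Borel partition $B_i=\{(x,j):h(x)(n_j)=i\}$. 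The resulting rectangle $\prod_j b_{n_j}\times c$ already has $|b_{n_j}|/g(n_j)>j$, so setting $q(n_j)=b_{n_j}$ gives a condition in $\mathbb{P}_{F,f,g}$ deciding $\tau$ on the infinite set $\{n_j:j\in c\}$. No block submeasures and no sub-box extraction are required.
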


\begin{proof}
    Let $p \Vdash \tau \in 2^\omega$, we must show that there is a condition $q \leq p$ that decides infinitely many values of $\tau$.\\
    By continuous reading of names we may assume there is a continuous function $h:[p] \rightarrow 2^\omega$ with $p \Vdash h(\dot{x}_{gen}) = \tau$. Define submeasures $\phi_n$ on $p(n)$ by $\phi_n(a) = \dfrac{|a|}{g(n)}$. By the above theorem and since $F$ is rapid, there exists a subsequence $\phi_{n_j}$ with $\{n_j\} \in F$ which grows fast enough that for every partition of $\prod_{n_j} p(n_j) \times \omega$ into Borel sets $B_0,B_1$, one of the pieces contains a product of the form $\prod_{n_j} b_{n_j} \times c$ where $c \subseteq \omega$ is infinite, $b_{n_j} \subseteq p(n_j)$, and $\phi_{n_j}(b_{n_j}) > j$ for each $j \in \omega$.\\
    Strengthen $p$ to decide all its values outside of $\{n_j\}$. This allows us to view $h$ as a function $h: \prod_{n_j} b_{n_j} \rightarrow \omega^\omega$. Let $B_0 = \{(x,j): h(x)(n_j) = 0\}$ and $B_1 = \{(x,j): h(x)(n_j) = 1\}$. One of the $B_i$ contains a product of the form $\prod_{n_j} b_{n_j} \times c$ with $c \subseteq \omega$ infinite. Letting $q\leq p$ be such that $q(n_j) = b_{n_j}$, we get a condition in $\mathbb{P}_{U,f,g}$ which forces that $h \upharpoonright [q]$ is decided on all $n_j$ with $j\in c$, thus $q$ decides $\tau$ on an infinite set.
\end{proof}

\begin{lem}
    If $F$ is Ramsey, then $\mathbb{P}_{F,f,g}$ preserves the ultrafilter $F$.
\end{lem}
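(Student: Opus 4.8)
\emph{The plan.} The statement to prove is: for every condition $p\in\mathbb{P}_{F,f,g}$ and every name $\dot A$ for a subset of $\omega$ there are $q\le p$ and $B\in F$ with $q\Vdash\check B\subseteq\dot A$ or $q\Vdash\check B\cap\dot A=\varnothing$. By first passing to a common refinement and then splitting $B$ into its two ``colour classes'', it suffices to produce $q\le p$ and $B\in F$ such that $q$ decides ``$n\in\dot A$'' for every $n\in B$. The first step is to replace $\dot A$ by a name with a Lipschitz reading: apply Corollary 3.5 to $\dot A$ (as a name for an element of $2^\omega$) to get $q_0\le p$, a continuous $h\colon[q_0]\to 2^\omega$ with $q_0\Vdash h(\dot x_{gen})=\chi_{\dot A}$, and $B_0\in F$ with $h\upharpoonright B_0$ Lipschitz. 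Since $B_0\in F$ and $\lim_F|q_0(n)|/g(n)=\infty$, one may strengthen $q_0$ to a singleton off $B_0$ without leaving $\mathbb{P}_{F,f,g}$, so that, writing $B_0=\{b_0<b_1<\cdots\}$, the value $h(x)(b_i)$ depends only on $x\!\upharpoonright\!\{b_0,\dots,b_{i-1}\}$. As $F$ is Ramsey, hence a rapid P-point, and the ratios tend to $\infty$ along $F$, one may thin $B_0$ further to $B_1=\{m_0<m_1<\cdots\}\in F$ along which the submeasures $\phi_j(a)=|a|/g(m_j)$ on $q_0(m_j)$ increase fast enough to apply Theorem 3.7 (the thinning is the one used in Lemma 3.8). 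Letting $F'=\{X\subseteq\omega:\{m_j:j\in X\}\in F\}$, a Ramsey ultrafilter on $\omega$, the goal becomes: find $q\le q_0$ and $D\in F'$ with $x\mapsto h(x)(m_j)$ constant on $[q]$ for every $j\in D$.

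\emph{The core.} Given this reduction, Theorem 3.7 — applied exactly as in Lemma 3.8 to the colouring $(x,j)\mapsto h(x)(m_j)$, but with a finite-valued refinement recording the behaviour of $h(\cdot)(m_j)$ over the finitely many already-fixed low coordinates — produces, for a condition and a prescribed infinite set of indices, a sub-box (with all ratios kept above a fixed divergent target) on which $h(\cdot)(m_j)$ becomes constant for an \emph{infinite} set of new indices $j$ drawn from the prescribed set; shrinking only on a tail at each application yields a fusion lower bound in $\mathbb{P}_{F,f,g}$. One runs a recursion: start with $q_0$ and the index set $\omega\in F'$; repeatedly decide a new infinite block of indices; if the set of indices decided so far ever lands in $F'$, stop, and otherwise continue below the current condition on the (still $F'$-positive) complement. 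If the recursion never halted, the complements would form a strictly increasing $\omega_1$-chain of proper subsets of $\omega$ — impossible, since that would require $\aleph_1$ distinct natural numbers. Hence the recursion halts at a countable stage, delivering $q\le q_0$ and $D\in F'$ along which $h(\cdot)(m_j)$ is constant on $[q]$. Transporting $D$ back to a set $B\in F$ and splitting it according to the two constant values finishes the proof. (At each limit stage of the recursion one takes the fusion of the countably many conditions built so far, and at each stage where the decided indices remain $F'$-small one in fact decides on an $F'$-positive complement, so the two alternatives are genuinely exhaustive.)

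\emph{The main obstacle.} The delicate point is precisely the bridge between Theorem 3.7, which only yields an \emph{arbitrary} infinite homogeneous set of indices, and the demand that the homogeneous set lie in $F$; this is where the full Ramsey property of $F$ (strictly more than the rapid-P-point hypothesis of Lemma 3.8) is needed, and where the bookkeeping of the fusion must be arranged with care — in particular because $h(\cdot)(m_j)$ depends, through the Lipschitz reading, on the lower coordinates, so that coordinates already fixed to large sets a priori obstruct making $h$ constant at later ones, and one must interleave the applications of Theorem 3.7 with the shrinking of those coordinates (and control how much the submeasures are allowed to contract at each application) so that ``fast enough'' survives long enough to drive the recursion to its stopping point.
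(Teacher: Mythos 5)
Your reduction steps (Lipschitz reading via Corollary 3.5, fixing coordinates off a set in $F$, thinning by rapidity so the Shelah--Zapletal theorem applies, transporting $F$ to $F'$ on the index set) match the setup the paper uses, but the heart of your argument --- getting the homogeneous index set \emph{into} the ultrafilter --- is a transfinite recursion that is not actually carried out, and I do not believe it can be carried out as described. Your cardinality trick (no strictly $\subseteq$-increasing $\omega_1$-chain of subsets of $\omega$) only converts ``the construction can always be continued while the decided set is outside $F'$'' into ``the construction must eventually succeed.'' The entire burden therefore falls on showing the construction never breaks down, and this is precisely what is missing. At successor stages, each application of Theorem 3.6 (Shelah--Zapletal) cuts the submeasure values down to roughly the prescribed targets, while a further application requires the \emph{current} values to grow ``fast enough'' relative to the next targets; nothing guarantees this after even one round, and it cannot be prearranged for an unbounded number of future rounds. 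Recovering by re-thinning with rapidity passes to a smaller $F$-set of coordinates, which forces you either to collapse previously homogenized coordinates to singletons (destroying the largeness you need on the eventual $F'$-set) or to include them in the next product and shrink them again, so that over a transfinite recursion individual coordinates are shrunk unboundedly often. At limit stages you need a single lower bound (with norms still divergent along $F$ and again ``fast enough'') for all conditions built so far; fusion in $\mathbb{P}_{F,f,g}$ handles a single $\omega$-sequence built with fusion discipline, but there is no mechanism here for iterating this through arbitrary countable ordinals. You flag this bridge as ``the main obstacle,'' but flagging it is not resolving it. A further warning sign: as written, your core recursion uses of $F'$ only that it is an ultrafilter (complement of a non-member is a member) plus rapidity/P-pointness for the thinning and fusion; if it worked, it would prove preservation for every rapid P-point, which is stronger than the statement and suggests the essential use of Ramseyness has not actually been located.

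The paper avoids all of this with a one-shot argument: Ramsey ultrafilters meet every dense $\Sigma^1_1$ subset of $[\omega]^\omega$, so it suffices to check that the analytic set $D$ of sets $A$ admitting a witness $q\le p$ with $\lim_{n\in A}|q(n)|/g(n)=\infty$ and $h(\cdot)(n)$ constantly $0$ (or constantly $1$) on $[q]$ for all $n\in A$ is dense; density is exactly one application of the Shelah--Zapletal theorem inside an arbitrary infinite $B$, i.e.\ the computation from the no-independent-reals lemma relativized to $B$. The member $A\in F\cap D$ then comes with a witness that is automatically a condition of $\mathbb{P}_{F,f,g}$ below $p$. If you want to salvage your approach, replacing the transfinite exhaustion by this genericity-style property of Ramsey ultrafilters is the missing ingredient; as it stands, your proof has a genuine gap at its central step.
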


\begin{proof}
    %
    Let $p \Vdash \tau \in 2^\omega$, we must show there exists some $A \in F$ and $q \in \mathbb{P}_{F,f,g}$ such that either $q \Vdash A \subseteq \tau$ or $q \Vdash A \cap \tau = \emptyset$.\\
    Since $F$ is Ramsey, it intersects every $\Sigma^{1}_{1}$ dense set in $[\omega]^\omega$ (\cite{FI} 3.4.3). By strengthening $p$ if necessary, we may assume there is a continuous $h:[p] \rightarrow 2^\omega$ with $p \Vdash h(\dot{x}_{gen})= \tau$. Consider the following set
    \begin{align*}
        D = \big\{A &\subseteq \omega: \exists q\ \operatorname{dom}(q) = \omega,\ q(n) \subseteq p(n),\ \lim_{n \in A}\dfrac{|q(n)|}{g(n)} = \infty,\\
        &[(\forall n \in A \ \forall x \in [q], \ h(x)(n)=0 ) \vee  (\forall n \in A \ \forall x \in [q], \ h(x)(n)=1 )]\big\},
    \end{align*}
    
    This set is clearly $\Sigma^{1}_{1}$. Suppose we have shown it is dense, and let $A \in F \cap D$. By the definition of the set, there is some $q\in \mathbb{P}_{F,f,g}$, $q \leq p$ such that either $q \Vdash \tau \cap A = \emptyset$ or $q \Vdash A \subseteq \tau$, as desired. Therefore it suffices to show this set is dense.
    
    Let $B \subseteq \omega$.
    Repeating the proof of the last lemma, where $\{n_j\} \subseteq B$ is any infinite set on which the submeasures grow fast enough, gives us some $B' \subseteq \{n_j\} \subseteq B$ and a function $q$ (not necessarily in $\mathbb{P}_{F,f,g}$) such that $\operatorname{dom}(q) = \omega$, $q(n) \subseteq p(n)$, $\lim_{n \in B}\dfrac{|q(n)|}{g(n)} = \infty$, and $h\upharpoonright [q]$ is decided on $B'$. Now let $q' \leq q$ agree with $q$ on $B'$ and decide all other values. Clearly we get that $B' \in D$, witnessed by $q'$. Therefore $D$ is dense.
\end{proof}

Lemma 3.8 completes the proof of Theorem 3.2. With the properties of $\mathbb{P}_{F,f,g}$ listed in 3.2, we are able to complete the proof of the main theorem.

\begin{thm}
    In $W[U]$ if $f$ and $g$ are such that $\lim_U \dfrac{f}{g} = \infty$, then there is no injection $\varphi: \prod_n f(n) / U \rightarrow \prod_n g(n) / U$
\end{thm}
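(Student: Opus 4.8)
The plan is to argue by contradiction inside the theory of balanced forcings over the Solovay model. Suppose some condition $p_0\in[\omega]^\omega$ forces over $W$ that $\dot\varphi$ is an injection $\prod_n f(n)/\dot U\to\prod_n g(n)/\dot U$. Fix an intermediate extension $V[K]$ satisfying CH and large enough to contain the relevant parameters, and, refining $p_0$, fix a Ramsey ultrafilter $F\in V[K]$ with $F\subseteq p_0$. By Lemma 2.1, $F$, viewed as a condition in $[\omega]^\omega$, decides all statements of the forcing language in $V[K]$; in particular $F\Vdash_{[\omega]^\omega}\dot\varphi$ is such an injection and, since $F$ decides the ground statement, $\lim_F f/g=\infty$. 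Thus $\mathbb{P}:=\mathbb{P}_{F,f,g}$ is defined over $V[K]$ and has all the properties listed in Theorem 3.2.

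Next I would read $\dot\varphi$ off the $\mathbb{P}$-generic. Since $\mathbb{P}$ preserves $F$ as an ultrafilter (Lemma 3.8), in $V[K][\dot x]$ the ultrafilter $F$ is still a balanced condition and $F\Vdash_{[\omega]^\omega}\dot\varphi([\dot x]_{\dot U})\in\prod g/\dot U$; using Lemma 2.1 one extracts a $\mathbb{P}$-name $\dot\tau$ for an element of $\prod g$ with $F\Vdash_{[\omega]^\omega}\dot\varphi([\dot x]_{\dot U})=[\dot\tau]_{\dot U}$. Applying the Lipschitz$^*$ reading of names (Corollary 3.5) to $\dot\tau$ and strengthening so as to decide the coordinates outside some $B\in F$, I obtain a condition $q\in\mathbb{P}$ (with $q(n)$ a singleton for $n\notin B$) and a Borel $h:[q]\to\prod g$ with $h\upharpoonright B:[q]\upharpoonright B\to\prod_{n\in B}g(n)$ Lipschitz, such that $q\Vdash_{\mathbb{P}}F\Vdash_{[\omega]^\omega}\dot\varphi([\dot x]_{\dot U})=[h(\dot x)]_{\dot U}$. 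The useful bookkeeping remark here is that for a ground set $S\in V[K]\cap P(\omega)$ one has $F\Vdash_{[\omega]^\omega}S\in\dot U$ iff $S\in F$, so it suffices to control agreement and disagreement of images up to ground sets.

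The contradiction then comes from producing two reals $x^0,x^1\in[q]$, each $\mathbb{P}$-generic over $V[K]$, and a set $A\in F$, with $\{n:x^0(n)=x^1(n)\}\subseteq\omega\setminus A$ and $h(x^0)\upharpoonright A=h(x^1)\upharpoonright A$. Granting this: in $V[K][x^i]$ we have $F\Vdash_{[\omega]^\omega}\dot\varphi([x^i]_{\dot U})=[h(x^i)]_{\dot U}$, and since $A,\omega\setminus A$ are ground sets with $A\in F$ and $\omega\setminus A\notin F$, we also get $F\Vdash_{[\omega]^\omega}[h(x^0)]_{\dot U}=[h(x^1)]_{\dot U}$ (as $A\in\dot U$ lies in the agreement set) and $F\Vdash_{[\omega]^\omega}[x^0]_{\dot U}\neq[x^1]_{\dot U}$ (as the agreement of $x^0,x^1$ sits in $\omega\setminus A\notin\dot U$). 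Forcing statements being absolute, these combine in $V[K][x^0,x^1]$ to give $F\Vdash_{[\omega]^\omega}\dot\varphi$ not injective, contradicting $F\subseteq p_0$. To build $x^0,x^1$ I would imitate the proof of Lemma 3.8: let $D$ be the $\Sigma^1_1$ set of all $A\subseteq B$ admitting finite-set data $\langle b_n:n\in A\rangle$ with $b_n\subseteq q(n)\times q(n)$, $|b_n|/(|q(n)|g(n))\to\infty$ along $n\in A$, and such that $h(x^0)(n)=h(x^1)(n)$ and $x^0(n)\neq x^1(n)$ whenever $n\in A$ and $(x^0(m),x^1(m))\in b_m$ for all relevant $m$; throwing in the usual "garbage" sets makes $D$ dense. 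Forcing with the poset of such data below a witness yields the pair $(x^0,x^1)$: its two coordinate projections project onto $\mathbb{P}$ below $q$ (given a data condition $\langle b_m\rangle$ and $p\le\langle\pi_0(b_m)\rangle$ in $\mathbb{P}$, restricting each $b_m$ to rows in $p(m)$ keeps $|b_m|\ge|p(m)|$, hence still a data condition), so $x^0,x^1$ are $\mathbb{P}$-generic over $V[K]$ and meet $q$; and since $F$ is Ramsey it meets the $\Sigma^1_1$ dense set $D$, providing $A\in F\cap D$.

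The main obstacle is the density of $D$: given $B'\subseteq B$, one must produce $A\subseteq B'$ together with the data $\langle b_n\rangle$, and this is where the Shelah--Zapletal theorem (Theorem 3.6) enters, applied along a subsequence of $B'$ on which $|q(n)|/g(n)$ grows faster than the theorem demands. One applies it to a two-piece Borel partition of a product $\prod_n D_n\times\omega$ with $D_n\subseteq q(n)^2$ and the submeasures taken to be (rescaled) counting measures, separating pairs according to whether $h$ agrees on them at the relevant coordinate; the homogeneous product must then be shown to land in the "agreeing" piece, the alternative being ruled out by a pigeonhole/non-injectivity argument that uses $|q(n)|/g(n)\to_F\infty$ (for instance via a preliminary thinning guaranteeing that each $b_n$ contains a suitable sub-square, on which a function into $g(n)$ cannot be injective). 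Pinning down the correct choice of $D_n$ and submeasures so that the "disagreeing" alternative is genuinely impossible is the technical heart of the proof.
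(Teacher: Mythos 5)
Your opening moves match the paper: pass to an intermediate model, use Lemma 2.1 so that a Ramsey ultrafilter condition decides the class of $\dot\varphi([\dot x]_{\dot U})$ for the $\mathbb{P}_{F,f,g}$-generic, and extract $q$, $B\in F$ and a Lipschitz $h$ with $h(\dot x)$ representing the image. The gap is exactly at what you call the technical heart, and it is not a removable technicality: you try to arrange only the one-sided outcome ``$x^0(n)\neq x^1(n)$ and $h(x^0)(n)=h(x^1)(n)$ on a set in $F$'' and to rule out the disagreeing alternative by pigeonhole on a sub-square. This cannot work. Since $h$ is only Lipschitz, $h(x^i)(n)$ depends on the entire past $x^i\upharpoonright n$, and the two members of your pair have different pasts; the relevant local maps are two functions $\psi_0,\psi_1:q(n)\rightarrow g(n)$, and these may have disjoint ranges (say $\psi_0\equiv 0$, $\psi_1\equiv 1$), in which case every pair of distinct threads lies in the ``disagreeing'' piece no matter how large a square or grid you have secured. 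Even when $\psi_0=\psi_1$, if its fibers have size about $|q(n)|/g(n)$ then the non-collision pairs form a fraction $1-1/g(n)$ of the full square, so largeness of $b_n$ under any rescaled counting submeasure never forces a collision, and a submeasure whose largeness would force a square of side $>g(n)$ is not subadditive. Hence the Shelah--Zapletal homogenization may perfectly well land in the disagreeing piece, and ``different inputs, different outputs'' yields no contradiction with injectivity.

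The missing idea is the two-sided dichotomy in the paper's proof: at each coordinate $n$, by pigeonhole (a single function from a set of size $>g(n)$ into $g(n)$ cannot be injective) one can choose the next values of the two branches realizing \emph{either} ``equal inputs, different outputs'' \emph{or} ``different inputs, equal outputs''; the first alternative contradicts well-definedness of $\tau$ on $U$-classes, the second contradicts injectivity, and an infinite set on which one alternative repeats suffices. Your scheme, by restricting attention to off-diagonal pairs, discards the first alternative and therefore cannot be completed as stated. Note also that the paper needs no auxiliary pair-forcing and no second application of Shelah--Zapletal at this stage: working in $V[G]$, where $V[K][u]$ is countable, a direct fusion below $q$ produces $q_\infty$ (not a condition) with $|q_\infty(b_n)|>g(b_n)$ along a set $B'$ diagonalizing the ultrafilter, such that \emph{every} branch of $[q_\infty]$ is $\mathbb{P}$-generic; the two branches are then built coordinate by coordinate, and the final contradiction is forced by an honest condition $C\subseteq B'$ lying below the ultrafilter, which also sidesteps the question -- left open in your last step -- of whether $F$ remains a balanced condition in $V[K][x^0,x^1]$.
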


\begin{proof}
 In $W$, let $p \in [\omega]^\omega$ be a condition and $\tau$ a name such that $p \Vdash \tau: \prod_n f(n)/U \rightarrow \prod_n g(n)/U$ is an injection. Let $V[K]$ be a model obtained by forcing with a small poset over $V$ such that $p,\tau \in V[K]$. Let $u$ be $[\omega]^\omega$ generic over $V[K]$ containing $p$. Let $\mathbb{P} = \mathbb{P}_{u,f,g}$. Work in $V[K][u]$.

 Since $\mathbb{P}$ preserves $u$, it remains an ultrafilter, and thus a balanced condition, after forcing with $\mathbb{P}$. Therefore by the balance of $u$, it decides the class of $\tau([x])$ when $x$ is $\mathbb{P}$-generic over $V[K][u]$. Thus, there is a $\mathbb{P}$ name $\sigma$ for a representative of $\tau([x_{gen}])$, and by continuous reading of names, there are $q \in \mathbb{P}$, $h:[q] \rightarrow \omega^\omega$ continuous, and $B \in u$, such that $q \Vdash u \Vdash h(x_{gen}) = \sigma$ and $h\upharpoonright B: [q] \upharpoonright B \rightarrow \omega^B$ is Lipschitz, and $q$ is decided outside of $B$.
 
 In $V[G]$, the set $u$ is countable, and there are only countably many dense subsets of $\mathbb{P}$. Enumerate $u$ as $(A_i)_{i < \omega}$ and the dense subsets of $\mathbb{P}$ as $(D_i)_{i < \omega}$. We can recursively build conditions $q_n \leq q$ and natural numbers $b_n \in B$ so that $q_{n+1} \leq q_n$, $q_j \upharpoonright \{b_0,\dots,b_i\} = q_i$, $b_n \in \bigcap_{k \leq n} A_k$, $\dfrac{|q_n(b_n)|}{g(b_n)} > n$, and deciding $q_n$ on $\{b_0,\dots,b_n\}$ gives a condition in $D_n$. Form $q_\infty$ by taking $q_\infty(b_n) = q_n(b_n)$ for each $n$, and decide $q_\infty$ outside of $B' = \{b_0,b_1,\dots\}$. By construction, the set $B' \subseteq B$ diagonalizes the ultrafilter $u$, every $x \in [q_\infty]$ is generic over $\mathbb{P}$, and for each $n$, $|q_\infty(b_n)| > g(b_n)$. Note that $q_\infty$ is not a condition in $\mathbb{P}$.

 Since every $x \in [q_\infty]$ is $\mathbb{P}$ generic, $u$ forces that $h(x)$ is a representative of $\tau([x])$ for all $x \in [q_\infty]$. We recursively build $x,y \in [q_\infty]$ such that for all $n \in B'$, ($x(n) = y(n)$ implies $h(x)(n) \neq h(y)(n)$) and ($x(n) \neq y(n)$ implies $h(x)(n) = h(y)(n)$).\\
 Suppose $x$ and $y$ are defined up to $n \in B'$. Define $\psi_x,\psi_y : q_\infty (n) \rightarrow g(n)$ by setting $\psi_x(m)$ equal to $h(x)(n)$ when $x(n) = m$ and $\psi_y(m)$ equal to $h(y)(n)$ when $y(n) = m$. If we cannot satisfy the desired conditions, then $\psi_x(m) = \psi_y(m)$ for all $m$, and $\psi_x(m_1) \neq \psi_y(m_2)$ whenever $m_1 \neq m_2$. The first condition implies $\psi_x = \psi_y$, but then the second condition implies that this function is injective, which cannot happen since $|q_\infty(n)| > g(n)$.

  Let $x$ and $y$ be as above, let $C \subseteq B'$ be an infinite set on which one of the two conditions occur. Then $C \leq u$ and we either have $C \Vdash x=_U y \wedge h(x) \neq_U h(y)$ or vice versa, which contradicts the fact that $C \Vdash h(x) \in \tau([x]) \wedge h(y) \in \tau([y])$.
\end{proof}
This completes the proof of the main theorem.

\bibliographystyle{plain}
\bibliography{bibliography.bib}
\end{document}